\date{}
\renewcommand{\uppercasenonmath}[1]{}
\newtheorem{thm}[subsection]{Theorem\;}
\newtheorem{cor}[subsection]{Corollary\;}
\newtheorem{Def}[subsection]{Definition\;}
\newtheorem{lem}[subsection]{Lemma\;}
\newtheorem{remark}{Remark\;}
\newtheorem{prop}[subsection]{Proposition\;}
\newtheorem{exm}[subsection]{Example\;}
\newcommand{\bthm}{\begin{thm} }
	\newcommand{\ethm}{\end{thm} }
\newcommand{\bpro}{\begin{prop}}
	\newcommand{\epro}{\end{prop}}
\newcommand{\bdf}{\begin{Def}}
	\newcommand{\edf}{\end{Def}}
\newcommand{\bexm}{\begin{exm}}
	\newcommand{\eexm}{\end{exm}}
\newcommand{\blem}{\begin{lem}}
	\newcommand{\elem}{\end{lem}}
\newcommand{\bpf}{\begin{proof}}
	\newcommand{\epf}{\end{proof}}
\newcommand{\bcor}{\begin{cor}}
	\newcommand{\ecor}{\end{cor}}
\newcommand{\ba}{\begin{array}}
	\newcommand{\ea}{\end{array}}
\newcommand{\bea}{\begin{eqnarray}}
	\newcommand{\eea}{\end{eqnarray}}
\newcommand{\brem}{\begin{remark}}
	\newcommand{\erem}{\end{remark}}
\def\Z{\mathbb{ Z}}
\def\deg{\mathrm{deg}}
\begin{document}
	\begin{center}
		{\large  \bf  
		Quantitative Estimates for the Size of the Zsigmondy Set in Arithmetic Dynamics
		}
		\footnote {Supported by NSFC (Nos. 12071209, 12231009).}\\
		\vskip 0.8cm
		{\small   Yang Gao and Qingzhong Ji \footnote{Corresponding author.\\ \indent E-mail addresses: 864157905@qq.com (Y. Gao),\; qingzhji@nju.edu.cn (Q. Ji)}}\\
		{\small School of Mathematics, Nanjing University, Nanjing
			210093, P.R.China}
	\end{center}

{\bf Abstract }
Let \( K \) be a number field. We provide quantitative estimates for the size of the Zsigmondy set of an integral ideal sequence generated by iterating a polynomial function \(\varphi(z) \in K[z]\) at a wandering point \(\alpha \in K.\)

{\small 
{\bf Mathematics Subject Classification 2020.} 11B83, 37P05, 11G50.}

{\bf Keywords.} Primitive divisors, Divisibility sequence, Height function, Arithmetic dynamics.
\section{\bf Introduction and main results}\label{02}
Let $\mathcal{A}=\left\{A_n\right\}_{n \geqslant 1}$ be a sequence of integers. A prime $p$ is called a primitive divisor of $A_n$ if
            $
		p \mid A_n$  and $p \nmid A_i $  for all $1 \leqslant i<n.$
		The Zsigmondy set of $\mathcal{A}$ is the set
		$
		\mathcal{Z}(\mathcal{A})=\left\{n \geqslant 1\;|\;A_n \text { does not have a primitive divisor }\right\}.
		$
Studying the finiteness of Zsigmondy sets has a long history in number theory.	
		In 1892, Zsigmondy \cite{Zsi} proved that if $a$ and $b$ are relatively prime positive integers and $ab\neq 1,$ then the Zsigmondy set $\mathcal{Z}\left(\left\{a^n - b^n\right\}_{n \geqslant 1}\right)$ is finite. 
In 2006, Everest, McLaren, and Ward \cite{EGG} proved the finiteness of the Zsigmondy set for the elliptic divisibility sequence $\{d_n\}_{n \geqslant 1}.$ 
In 2007, Rice \cite{Rice} studied polynomial recursive sequences and proved that the Zsigmondy set is finite for the sequence $\{a_n\}_{n \geqslant 1}$ generated by $a_{n+1} = f(a_n),$ where $f(z) \in \mathbb{Z}[z].$
In 2009, Ingram and Silverman \cite{IP} extended Rice's results to rational functions over algebraic number fields. 
In 2011, Doerksen and Haensch \cite{DKH} examined the primitive divisors in the critical orbit of polynomials of the form $z^d + c \in \mathbb{Z}[z].$  For more information on this topic, please refer to  \cite{GNT}, \cite{Jones}, \cite{HOK}, \cite{LC2},  and \cite{RF}.

Let $K$ be a number field and let $\mathcal{A}=\left\{\mathfrak{A}_n\right\}_{n \geqslant 1}$ be a sequence of nonzero integral ideals of $K.$ A prime ideal $\mathfrak{p}$ is called a primitive divisor of $\mathfrak{A}_n$ if
		$
		\mathfrak{p} \mid \mathfrak{A}_n$ and $\mathfrak{p} \nmid \mathfrak{A}_i $ for all  $1 \leqslant i < n.$
  The Zsigmondy set of $\mathcal{A}$ is the set
 $$\mathcal{Z}(\mathcal{A}) =\left\{n \geqslant 1\;|\; \mathfrak{A}_n\text{ does not have a primitive divisor}\right\}.$$

In 2009, Ingram and Silverman \cite{IP} proposed a conjecture as follows.\\
\textbf{Conjecture.} Let $\mathcal{O}_K$ be the ring of algebraic integers of a number field $K.$ Let $\varphi(z) \in K(z)$ be a rational function of degree $d \geqslant 2,$ and let $\alpha \in K$ be a $\varphi$-wandering point. For each $n \geqslant 1,$ write the ideal
$$
\left(\varphi^n(\alpha)-\alpha\right) \mathcal{O}_K=\mathfrak{A}_n \mathfrak{B}_n^{-1}
$$
as a quotient of relatively prime integral ideals (If $\varphi^n(\alpha)=\infty,$ we set $\mathfrak{A}_n=(1),$  $\mathfrak{B}_n=(0)$). Then the dynamical Zsigmondy set $\mathcal{Z}\left(\left\{\mathfrak{A}_n\right\}_{n \geqslant 1}\right)$ is finite.

In \cite{Ji and Zhao}, the second author of this paper, joint with Z. Zhao, studied similar problems for a Drinfeld module.

In 2013, Holly Krieger \cite{HOK}, in her PhD thesis, introduced the notation of an \(S\)-rigid divisibility sequence.
\begin{Def}\label{520001} Let $K$ be a number field with \(\mathcal{O}_K\) the ring of algebraic integers of $K.$ Let \(S\) be a finite set of places, including all archimedean ones.   We say that a sequence  \(\left\{\mathfrak{a}_n\right\}_{n \geqslant 1}\)  integral ideals of \(\mathcal{O}_K\) is an \(S\)-rigid divisibility sequence if it satisfies the following conditions:
\begin{enumerate}
    \item For every \(\mathfrak{p} \notin S\) and all \(m, n \in\mathbb{N}^*,\) if\; \(\mathfrak{p} \mid \operatorname{gcd}\left(\mathfrak{a}_n, \mathfrak{a}_m\right),\) then \(\mathfrak{p} \mid \mathfrak{a}_{\operatorname{gcd}(m, n)}.\)
    \item For every \(\mathfrak{p} \notin S\) and  \(m \in \mathbb{N}^*\) with \(\operatorname{ord}_{\mathfrak{p}}\left(\mathfrak{a}_m\right)>0,\) we have \(\operatorname{ord}_{\mathfrak{p}}\left(\mathfrak{a}_{km}\right)=\operatorname{ord}_{\mathfrak{p}}\left(\mathfrak{a}_m\right)\) for all \(k \geqslant 1.\)
\end{enumerate}
\end{Def}

  Let  \( f \in K[z],\) $\operatorname{deg}f\geq 2$ and \( \alpha \in K \) with \( f'(\alpha) = 0 \) and an infinite forward orbit. Write
  $$
  \left(f^n(\alpha) - \alpha\right)\mathcal{O}_K = \mathfrak{a}_n \mathfrak{b}_n^{-1}
  $$
  with \( \mathfrak{a}_n \) and \( \mathfrak{b}_n \) coprime ideals. Holly Krieger {\rm(\cite{HOK})} proved that  the sequence \( \{ \mathfrak{a}_n \}_{n \geq 1} \) is an \( S \)-rigid divisibility sequence for some \( S \) and the Zsigmondy set $\mathcal{Z}\left(\left\{\mathfrak{a}_n\right\}_{n \geqslant 1}\right)$ is finite. Furthermore, if \( f(z) \in \mathcal{O}_K[z],\) one can choose \( S = M^{\infty}_K.\) 

In this paper, we prove the following results:
\begin{thm}\label{520002}
Let $K$ be a number field with \(\mathcal{O}_K\) the ring of algebraic integers of $K.$
  Let $\varphi(z)\in K[z]$ with $\operatorname{deg} \varphi \geqslant 3.$ Suppose $\alpha\in K$ is a wandering point of $\varphi(z).$ For each $n \geqslant 1,$ write the ideal $(\varphi^n(\alpha)-\alpha)\mathcal{O}_K = \mathfrak{A}_n \mathfrak{B}_n^{-1}$ as a quotient of relatively prime integral ideals. If the sequence of ideals $\{\mathfrak{A}_n\}_{n \geqslant 1}$ is an $S$-rigid divisibility sequence for some \(S,\) then the Zsigmondy set $\mathcal{Z}\left(\left\{\mathfrak{A}_n\right\}_{n \geqslant 1}\right)$ is finite and there is a constant $M>0$ depending only on $\operatorname{deg}\psi,$  $[K:\mathbb{Q}],$ $\#S,$  $h(\frac{1}{\psi(\frac{1}{z})}),$  $h(\psi),$ and $\hat{h}_{\psi}(0),$ such that 
$\#\mathcal{Z}(\{\mathfrak{A}_n\}_{n \geqslant 1})\leqslant M,$ where $\psi(z)=\varphi(z+\alpha)-\alpha.$
\end{thm}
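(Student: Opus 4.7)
The strategy is a divisor-sum argument: the $S$-rigid hypothesis will bound $\log N(\mathfrak{A}_n)$ on Zsigmondy indices from above by the values at proper divisors of $n$, while canonical-height theory will provide a matching lower bound of order $d^n$. First I would pass to the translate $\psi(z) = \varphi(z+\alpha) - \alpha$, so that $\psi^n(0) = \varphi^n(\alpha) - \alpha$ for every $n$, the ideal sequence $\{\mathfrak{A}_n\}$ is unchanged, and $0$ becomes a wandering point of $\psi$ with $d := \deg \psi \geq 3$. All height quantities in the statement refer to $\psi$, so from now on one works with the orbit of $0$.

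\textbf{Rigidity inequality.} Assume $n \in \mathcal{Z}(\{\mathfrak{A}_n\})$. For every finite prime $\mathfrak{p} \mid \mathfrak{A}_n$ with $\mathfrak{p} \notin S$, the failure of primitivity together with condition (1) of Definition~\ref{520001} would produce a minimal index $d(\mathfrak{p})$ with $d(\mathfrak{p}) \mid n$, $d(\mathfrak{p}) < n$, and $\mathfrak{p} \mid \mathfrak{A}_{d(\mathfrak{p})}$; condition (2) then forces $\operatorname{ord}_\mathfrak{p}(\mathfrak{A}_n) = \operatorname{ord}_\mathfrak{p}(\mathfrak{A}_{d(\mathfrak{p})})$. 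Grouping contributions by $d(\mathfrak{p})$ yields the key inequality
\[
\log N(\mathfrak{A}_n) \;\leq\; \sum_{d' \mid n,\, d' < n} \log N(\mathfrak{A}_{d'}) \;+\; \Sigma_S(n), \qquad \Sigma_S(n) := \sum_{\mathfrak{p} \in S_f} \operatorname{ord}_\mathfrak{p}(\mathfrak{A}_n)\, \log N(\mathfrak{p}),
\]
where $S_f$ denotes the finite places of $S$.

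\textbf{Growth bounds from canonical heights.} A telescoping argument gives the explicit estimate $|h(\psi^n(0)) - d^n \hat{h}_\psi(0)| \leq c_1$ with $c_1$ depending on $d$ and $h(\psi)$. Decomposing the Weil height by places gives
\[
[K:\mathbb{Q}]\, h(\psi^n(0)) \;=\; \log N(\mathfrak{A}_n) + A'_n \;=\; \log N(\mathfrak{B}_n) + A_n,
\]
with $A_n, A'_n \geq 0$ the archimedean contributions. Since $\psi$ is a polynomial, at each archimedean place the orbit of $0$ falls into the basin of $\infty$ once it exceeds an explicit escape radius determined by $h(\psi)$ and $h\bigl(1/\psi(1/z)\bigr)$; the time to reach that radius is controlled by $\hat{h}_\psi(0)^{-1}$, which bounds $A'_n \leq c_2$ uniformly in $n$. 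A parallel local analysis at each $\mathfrak{p} \in S_f$, via the dominant-term estimate $|\psi^n(0)|_\mathfrak{p} \asymp |a_d|_\mathfrak{p}\, |\psi^{n-1}(0)|_\mathfrak{p}^d$ above the local escape radius, yields $\Sigma_S(n) \leq c_3\, \#S\, d^{n-1}$. Together these produce
\[
\log N(\mathfrak{A}_n) \;\geq\; [K:\mathbb{Q}]\, d^n \hat{h}_\psi(0) - c_4, \qquad \log N(\mathfrak{A}_{d'}) \;\leq\; [K:\mathbb{Q}]\, d^{d'} \hat{h}_\psi(0) + c_4,
\]
with $c_4$ explicit in the parameters of the theorem.

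\textbf{Conclusion and main obstacle.} Inserting these bounds into the rigidity inequality gives, for $n \in \mathcal{Z}$,
\[
[K:\mathbb{Q}]\, d^n \hat{h}_\psi(0) - c_4 \;\leq\; \tau(n)\bigl([K:\mathbb{Q}]\, d^{n/p} \hat{h}_\psi(0) + c_4\bigr) + c_3\, \#S\, d^{n-1},
\]
where $p \geq 2$ is the smallest prime factor of $n$ and $\tau(n)$ the divisor count. Since $d \geq 3$, the left-hand side strictly dominates the right-hand side for $n$ large, so the inequality must fail once $n > M$ for an explicit constant $M$ depending on $d$, $[K:\mathbb{Q}]$, $\#S$, $h(\psi)$, $h\bigl(1/\psi(1/z)\bigr)$, and $\hat{h}_\psi(0)$. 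The technical heart of the proof will be the archimedean and $S$-adic analysis in the growth bounds: producing explicit escape-radius constants depending only on $h(\psi)$ and $h\bigl(1/\psi(1/z)\bigr)$, and verifying that all error terms stay of strictly smaller order $O(d^{n-1})$ than the main term $[K:\mathbb{Q}] d^n \hat{h}_\psi(0)$. The hypothesis $\deg\varphi \geq 3$ enters only at this final divisor-sum comparison, where it guarantees that $d^n$ beats $\tau(n) d^{n/2}$ with a fully explicit constant.
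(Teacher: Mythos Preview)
Your overall architecture---reduce to $\psi$, use $S$-rigidity to bound the non-primitive part by a divisor sum, and compare against a canonical-height lower bound of size $d^n$---matches the paper. The serious gap is in your ``growth bounds'' paragraph, specifically the claims $A'_n\le c_2$ and $\Sigma_S(n)\le c_3\,\#S\,d^{n-1}$.

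Your escape-radius argument only yields information at a place $v$ once $|\psi^{n}(0)|_v$ exceeds the escape radius. But nothing in the hypotheses forces this to happen at every $v\in S$: the condition $\hat h_\psi(0)>0$ says only that $0$ escapes at \emph{some} places, not all. At an archimedean $v$ (or a finite $\mathfrak p\in S$) where $0$ lies in the filled Julia set of $\psi$, the orbit stays bounded and $|\psi^n(0)|_v$ may become arbitrarily small along a subsequence; then $A'_n$ (respectively $\operatorname{ord}_{\mathfrak p}(\mathfrak A_n)\log N\mathfrak p$) is unbounded, and no uniform $c_2$ or bound of order $d^{n-1}$ exists. In fact the crude estimate $\operatorname{ord}_{\mathfrak p}(\mathfrak A_n)\log N\mathfrak p\le [K:\mathbb Q]\,h(\psi^n(0))$ gives only $\Sigma_S(n)=O(d^n)$, the same order as your main term, so your final inequality does not close.

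This is exactly the point where the paper invokes an external Diophantine-approximation input rather than a direct local estimate. The paper works throughout with the prime-to-$S$ norm $\mathcal N_S$, so the $S$-contribution never appears as a separate error term; the needed lower bound $\frac{3}{4}\hat h_\psi(0)d^n<\frac{1}{[K:\mathbb Q]}\log\mathcal N_S(\mathfrak A_n)$ is obtained by applying the quantitative quasi-integrality theorem of Hsia--Silverman (Theorem~11(b) of \cite{HLC}) to $\tilde\psi(z)=1/\psi(1/z)$ at the point $\infty$. That theorem bounds the \emph{number} of $n$ for which $\sum_{v\in S}d_v\lambda_v(Q_n,\infty)\ge \tfrac18\hat h_{\tilde\psi}(Q_n)$, i.e.\ the number of $n$ for which $\psi^n(0)$ is $S$-adically close to $0$; this is precisely the statement you are trying to prove by hand. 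The dependence of $M$ on $h(1/\psi(1/z))$ in the theorem comes from this application, not from an escape-radius constant. Your proposal would be repaired by replacing the escape-radius paragraph with an appeal to that result; the rest of your outline (including the sharper divisor sum over $d'\mid n$ rather than all $j<n$) then goes through.
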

\begin{remark}\label{3309}
Let \(d = \operatorname{deg} \psi.\) Specifically, we can take \(M\) as follows
$$ 1+\log_d^+ \left( \frac{8(c_3(d) + c_4(d) h(\psi))}{\widehat{h}_{\psi}(0)} \right)+\;\frac{8(c_3(d) + c_4(d)h(\psi))}{(3d-7) \hat{h}_{\psi}(0)}+\; 4^{\#S} \gamma + \log_d \left( \frac{h(\tilde{\psi})}{\hat{h}_{\psi}(0)} + 1 \right),$$
where $\log_d^{+} x = \log_d \max \{1, x\},$  \(\gamma\) depends only on \(d\) and \([K : \mathbb{Q}],\) and \(c_3(d)\) and \(c_4(d)\) are constants depending only on \(d.\) Moreover, explicit expressions for \(c_3(d)\) and \(c_4(d)\) in terms of \(d\) can be derived.

Here, \( h(\psi) \) is the height of the rational map \(\psi,\) \( h(\tilde{\psi}) \) is the height of \(\frac{1}{\psi\left(\frac{1}{z}\right)},\) and \(\hat{h}_{\psi}(0)\) is the canonical height of \(\psi\) at \(0.\) 
\end{remark}

\begin{exm}
   Let \( K \) be a number field with \(\mathcal{O}_K\) the ring of algebraic integers of $K.$
For the polynomial \(\varphi(z) = z^d + c\) with $d\geqslant 3$ and \(c \in \mathcal{O}_K,\) there exists a constant \(M > 0\) depending only on \(d\) and the degree \([K : \mathbb{Q}]\) such that 
   \(
   \#\mathcal{Z}\left(\left\{\varphi^n(0)\right\}_{n \geqslant 1}\right) \leqslant M.
   \)
    This follows from Remark {\rm\ref{3309}} and  Theorem 1 in \cite{PG1}.
\end{exm}

\begin{Def}
A polynomial $\varphi(z)\in K[z]$ is called a powerful polynomial over $K$ if $\operatorname{deg}(\varphi) \geqslant 2$ and \( p(z)^2 \) divides \( \varphi(z) \) for every irreducible factor \( p(z) \) of \( \varphi(z).\) 
\end{Def}
For example, $\varphi(z) = f_1(z)^{e_1} f_2(z)^{e_2} \cdots f_m(z)^{e_m}$ is powerful, 
where \( f_1(z),\dots, f_m(z) \in K[z] \)  (not necessarily pairwise distinct), \( e_i \geqslant 2 \) are integers, and \( \deg(f_i(z)) \geqslant 1 \) for each \( i = 1, \ldots, m.\)

\begin{thm}\label{520004} 
 Let \( K \) be a number field with \(\mathcal{O}_K\) the ring of algebraic integers in \( K.\)  
Suppose \( \varphi \in K[z] \) is a powerful polynomial with \(\deg \varphi \geq 3,\) and let $0$ be a wandering point of \( \varphi(z).\)
For each \(n \geqslant 1,\) write the ideal \((\varphi^n(0))\mathcal{O}_K = \mathfrak{A}_n \mathfrak{B}_n^{-1}\) as a quotient of relatively prime integral ideals. Then the sequence \(\{\mathfrak{A}_n\}_{n \geqslant 1}\) forms an \(S\)-rigid divisibility sequence for some \(S,\) and the Zsigmondy  set \(\mathcal{Z}(\{\mathfrak{A}_n\}_{n \geqslant 1})\) is finite. In particular, if \(\varphi(z) = f_1(z)^{e_1} f_2(z)^{e_2} \cdots f_m(z)^{e_m},\) where \(f_i(z) \in \mathcal{O}_K[z]\) for \(1 \leqslant i \leqslant m,\) then we can take \(S = M^{\infty}_K.\)
\end{thm}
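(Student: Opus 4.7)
The plan is to reduce to Theorem~\ref{520002} by verifying that $\{\mathfrak{A}_n\}_{n\geq 1}$ satisfies the two conditions of Definition~\ref{520001} for a suitable finite set of places $S$; once $S$-rigid divisibility is in hand, finiteness of $\mathcal{Z}(\{\mathfrak{A}_n\})$ is immediate (note $\alpha=0$ here, so $\psi(z)=\varphi(z+\alpha)-\alpha=\varphi(z)$). Throughout I write $a_n=\varphi^n(0)$ and factor $\varphi = c\prod_i p_i(z)^{e_i}$ with $p_i\in K[z]$ irreducible and $e_i\geq 2$.

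The algebraic heart of the argument is that the powerful hypothesis forces the radical $\prod_i p_i$ to divide $\varphi'$: each $p_i^{e_i-1}$ divides $\varphi'$, and a direct inspection of $\varphi'(\beta)=\sum_j e_j p_j'(\beta)p_j(\beta)^{e_j-1}\prod_{k\neq j}p_k(\beta)^{e_k}$ shows that every term is divisible by $\mathfrak{p}$ whenever $\mathfrak{p}\mid p_i(\beta)$ for some $i$ (using $e_k\geq 2$ in the other terms). Consequently, for every prime $\mathfrak{p}\notin S$ -- where $S$ contains the archimedean places and the primes appearing in denominators of the coefficients of $\varphi$ -- and every $\mathfrak{p}$-integral $\beta\in K$, the implication $\mathfrak{p}\mid\varphi(\beta)\Rightarrow\mathfrak{p}\mid\varphi'(\beta)$ holds. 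Since $(\varphi^m)'(0)=\prod_{i=0}^{m-1}\varphi'(a_i)$ contains the factor $\varphi'(a_{m-1})$ and $a_m=\varphi(a_{m-1})$, this yields the crucial consequence $\mathfrak{p}\mid a_m\Rightarrow\mathfrak{p}\mid(\varphi^m)'(0)$.

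Condition (2) of Definition~\ref{520001} then follows by a Taylor expansion. Assuming $\operatorname{ord}_{\mathfrak{p}}(a_m)=e>0$, I would induct on $k$ to show $\operatorname{ord}_{\mathfrak{p}}(a_{km})=e$. Writing
\[
a_{km}=\varphi^m(a_{(k-1)m})=a_m+(\varphi^m)'(0)\,a_{(k-1)m}+\sum_{j\geq 2}c_j\,a_{(k-1)m}^j,
\]
the $c_j$ are the Taylor coefficients of $\varphi^m$ and so $\mathfrak{p}$-integral outside $S$; the middle summand has $\mathfrak{p}$-adic valuation $\geq 1+e$ by the previous paragraph, the tail has valuation $\geq 2e$, and $\operatorname{ord}_{\mathfrak{p}}(a_m)=e$ dominates, giving $\operatorname{ord}_{\mathfrak{p}}(a_{km})=e$. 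For condition (1), the weaker consequence $\mathfrak{p}\mid a_m\Rightarrow\mathfrak{p}\mid a_{km}$ combined with the decomposition $n=qm+r$ yields $a_n=\varphi^r(a_{qm})\equiv\varphi^r(0)=a_r\pmod{\mathfrak{p}}$, so $\mathfrak{p}\mid a_n$ forces $\mathfrak{p}\mid a_r$; iterating this Euclidean descent delivers $\mathfrak{p}\mid a_{\gcd(m,n)}$.

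When $\varphi\in\mathcal{O}_K[z]$ no non-archimedean primes are needed in $S$, and every $a_n$ automatically lies in $\mathcal{O}_K$, so $S=M_K^\infty$ suffices; finiteness of the Zsigmondy set then follows from Theorem~\ref{520002}. The main obstacle I anticipate is the bookkeeping for $S$ in the general $K[z]$ case: one must verify that a single finite set, chosen in terms of $\varphi$ alone and independent of $m$, simultaneously handles $\mathfrak{p}$-integrality of $\varphi$, of all intermediate iterates $a_i$ for $i<m$, and of the Taylor coefficients of every $\varphi^m$, so that the induction in condition (2) and the descent in condition (1) apply uniformly in $m$ and $n$.
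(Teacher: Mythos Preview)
Your proposal is correct and follows essentially the same route as the paper. Both arguments rest on the same algebraic core: the powerful hypothesis guarantees that $\mathfrak{p}\mid\varphi(\beta)$ forces $\mathfrak{p}\mid\varphi'(\beta)$ for $\mathfrak{p}\notin S$ (the paper packages this as the polynomial divisibility $\varphi\mid(\varphi')^E$ in $\mathcal{O}_{K,S}[z]$ with $E=\max_j e_j$), and then the chain rule gives $\mathfrak{p}\mid a_m\Rightarrow\mathfrak{p}\mid(\varphi^m)'(0)$; a Taylor/linear-term expansion of $\varphi^m$ yields the rigidity of valuations, and the Euclidean descent gives the $\gcd$ condition. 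The paper organizes the induction around the \emph{minimal} index $r$ with $\operatorname{ord}_{\mathfrak{p}}(a_r)>0$ rather than an arbitrary $m$, but this is purely a matter of presentation. One small point on your bookkeeping: the set $S$ should be chosen so that the factors $p_i$ (not merely $\varphi$ itself) lie in $\mathcal{O}_{K,S}[z]$, since your inspection of $\varphi'(\beta)$ needs $p_i(\beta)$ and $p_j'(\beta)$ to be $\mathfrak{p}$-integral; the paper does exactly this by building $S$ from the denominators of the given factors $f_j$.
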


\begin{thm}\label{520005}
Let $m \geqslant 2,$ and let $$\varphi(z) = (zf_1(z) + a_1)^{e_1} (zf_2(z) + a_2)^{e_2} \cdots (zf_m(z) + a_m)^{e_m}$$ where $a_1, a_2, \ldots, a_m$ are integers, with $|a_j| \geqslant 2$ for some $j.$ Assume that $e_i \geqslant 2$ and $f_i(z) \in \mathbb{Z}[z]$  has no integer roots for $i = 1, 2, \dots, m.$ \;Then
\begin{enumerate}
\item[(1)] $0$ is a preperiodic point of $\varphi(z)$ if and only if  $0$ is a fixed point of $\varphi(z).$
    \item[(2)] If $0$ is a wandering point of $\varphi(z),$ then the Zsigmondy set $\mathcal{Z}\left(\left\{\varphi^n(0)\right\}_{n \geqslant 1}\right)$ is empty.
\end{enumerate}
\end{thm}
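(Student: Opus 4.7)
The plan is to prove Part~(1) via a quantitative growth estimate that contradicts preperiodicity, and Part~(2) by combining the rigidity structure furnished by Theorem~\ref{520004} with a super-exponential size comparison.

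For Part~(1) the ``if'' direction is immediate. For the converse put $c_n:=\varphi^n(0)$, $M:=\max_i|a_i|$, and $E:=\sum_i e_i$, and suppose for contradiction that $0$ is preperiodic with $\varphi(0)\ne 0$. Then every $a_i$ is nonzero and some $|a_j|\ge 2$, so $M\ge 2$ and $|c_1|=\prod_i|a_i|^{e_i}\ge M^{e_j}\ge M^2\ge 2M$. Because each $f_i$ has no integer root, $|f_i(\beta)|\ge 1$ for every nonzero integer $\beta$; for $|\beta|\ge M$ this yields $|\beta f_i(\beta)+a_i|\ge|\beta|-M$ and hence $|\varphi(\beta)|\ge(|\beta|-M)^E$. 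Since $m\ge 2$ and each $e_i\ge 2$ force $E\ge 4$, induction starting from $|c_1|\ge 2M$ gives $|c_n|\ge 2M$ for every $n\ge 1$ and then $|c_{n+1}|\ge(|c_n|/2)^E>|c_n|$, so $\{c_n\}$ is unbounded, contradicting preperiodicity.

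For Part~(2), Theorem~\ref{520004} applied to the powerful polynomial $\varphi\in\mathbb{Z}[z]$ tells us that $\{c_n\}$ is an $S$-rigid divisibility sequence with $S=M^\infty_{\mathbb{Q}}$, so both rigidity conditions hold at every rational prime. The case $n=1$ is vacuous, so suppose for contradiction that some $n\ge 2$ lies in $\mathcal{Z}(\{c_n\})$. For each prime $p\mid c_n$ with rank of apparition $m_p:=\min\{k\ge 1:p\mid c_k\}$, rigidity~(1) forces $m_p\mid n$, the assumption $n\in\mathcal{Z}$ forces $m_p<n$, and rigidity~(2) gives $v_p(c_n)=v_p(c_{m_p})$. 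Collecting $p$-adic valuations produces
\[
|c_n|\;=\;\prod_{d\mid n,\,d<n}\;\prod_{m_p=d}p^{v_p(c_d)}\;\le\;\prod_{d\mid n,\,d<n}|c_d|,
\]
which I will contradict by establishing the reverse inequality $|c_n|>\prod_{d\mid n,\,d<n}|c_d|$.

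The size comparison rests on pairing the lower bound $|c_n|\ge(|c_{n-1}|/2)^E$ from Part~(1) with the standard upper bound $|c_k|\le C(1+|c_{k-1}|)^D$, $D=\deg\varphi\ge E\ge 4$. Together they yield the canonical-height expansion $\log|c_n|=D^n\hat h_\varphi(0)+O(1)$, so $\log|c_n|$ grows like $D^n\hat h_\varphi(0)$ while $\sum_{d\mid n,\,d<n}\log|c_d|$ is dominated by twice its largest term $D^{n/p^*}\hat h_\varphi(0)$, with $p^*$ the smallest prime factor of $n$. The multiplicative gap $D^{n-n/p^*}\ge D^{n/2}\ge 4$ then overcomes the bounded $O(1)$ corrections. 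The main obstacle will be making this comparison sharp enough to handle \emph{every} $n\ge 2$, including small ones where the implicit constants might interfere; I anticipate sidestepping the issue for $n$ a prime power by the cleaner observation that $c_n/c_{n/p^*}$ (an integer, since $c_{n/p^*}\mid c_n$) is supported precisely on the primitive divisors of $c_n$, so the inequality $|c_n|>|c_{n/p^*}|$ from Part~(1) already produces a primitive divisor, and for composite $n$ with multiple distinct prime factors by the M\"obius-inverted identity $R_n=\prod_{d\mid n}|c_d|^{\mu(n/d)}$ coupled with the canonical-height bound, verifying $R_n>1$.
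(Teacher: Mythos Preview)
Your Part~(1) is correct and parallels the paper's Lemma~5.1, just with a slightly weaker inequality.

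Your Part~(2), however, has a genuine gap. After correctly reducing to the inequality $|c_n|>\prod_{d\mid n,\,d<n}|c_d|$, you split into prime powers (handled cleanly) and composite $n$ with several prime factors. For the latter you only \emph{sketch} a canonical-height argument, explicitly conceding that the $O(1)$ constants ``might interfere'' for small $n$; the M\"obius identity $R_n=\prod_{d\mid n}|c_d|^{\mu(n/d)}$ together with $\log|c_d|=D^d\hat h_\varphi(0)+O_\varphi(1)$ does give $\log R_n=\hat h_\varphi(0)\sum_{d\mid n}\mu(n/d)D^d+O_\varphi(2^{\omega(n)})$, but you never bound the error against the main term for all $n\ge 6$, and there is no reason the implicit constant (which depends on the coefficients of $\varphi$) should be small enough. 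As written, the composite case is an intention, not a proof.

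The fix is hiding in your own Part~(1) computation. From $|c_{n+1}|\ge(|c_n|/2)^E$ with $E\ge 4$ and $|c_n|\ge 2M\ge 4$ you get $|c_{n+1}|\ge |c_n|^4/16\ge |c_n|^2$ for every $n\ge 1$. This squaring bound telescopes:
\[
|c_n|\ \ge\ |c_{n-1}|^2\ \ge\ |c_{n-1}|\,|c_{n-2}|^2\ \ge\ \cdots\ \ge\ |c_1|\prod_{k=1}^{n-1}|c_k|\ >\ \prod_{k=1}^{n-1}|c_k|\ \ge\ \prod_{d\mid n,\,d<n}|c_d|,
\]
which is exactly the inequality you need, uniformly for all $n\ge 2$, with no case split and no canonical heights. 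This is precisely the paper's route: it proves $|c_n|>|c_{n-1}|^2$ directly (Lemma~5.1) and then bounds the non-primitive part by $\prod_{k=1}^{n-1}|c_k|<|c_n|$ via Lemma~\ref{00003}. Your detour through $\hat h_\varphi$ is unnecessary and, as stated, incomplete.
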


This paper is organized as follows: In $\S 2,$ we will state some preliminaries which will be used in the proofs of our main results. In $\S 3,$ 
we shall give the proof of Theorems {\rm\ref{520002}}. In $\S 4,$ we shall give the proof of Theorem {\rm\ref{520004}}.
 In $\S 5,$ we shall give the proof of Theorem {\rm\ref{520005}}.\;
\section{\bf Preliminaries}

\subsection*{
\textbf{2.1 Dynamical System}
}
Let $K$ be a field and $\overline{K}$ be an algebraic closure of $K.$
		A rational function $\varphi(z)=\frac{f(z)}{g(z)} \in K(z)$ is a quotient of polynomials $f(z), g(z)\in K[z]$ with no common factors.
		 The degree of $\varphi$ is
		$
		\operatorname{deg} \varphi=\max \{\operatorname{deg} f, \operatorname{deg} g\}.
		$
		Let $\varphi^{\prime}(z)$ be the formal derivative of $\varphi(z).$ {Let $\alpha\in \overline{K}.$}
		If $\varphi^{\prime}(\alpha)=0,$ then ${\alpha}$ is called a critical point of $\varphi(z).$
		The rational function $\varphi$ of degree $d$ induces a rational map (morphism) of the projective space $\mathbb{P}^1(K),$
		$$
		\varphi: \mathbb{P}^1(K) \longrightarrow \mathbb{P}^1(K),\;\;
		\varphi([X:Y]) = \left[Y^d f(X / Y): Y^d g(X / Y)\right].$$
		We write
		$$
		\varphi^n=\underbrace{\varphi \circ \varphi \circ \cdots \circ \varphi}_{n \text { iterations }}
		$$
		for the $n$-th iterate of $\varphi.$
		The iterates of $\varphi$ applied to a point $P \in \mathbb{P}^1(K)$ give the forward orbit of $P,$ which we denote by
		$$
		\mathcal{O}_\varphi^{+}(P)=\left\{P, \varphi(P), \varphi^2(P), \varphi^3(P), \ldots\right\}.
		$$
		The point $P$ is called a wandering point of $\varphi$ if $\mathcal{O}_{\varphi}^{+}(P)$ is an infinite set; otherwise $P$ is called a preperiodic point of $\varphi.$
  The backward orbit $O_{\varphi}^{-}(P)$ under $\varphi$ is  the union over all $n \geqslant 0$ of $\varphi^{-n}(P):=\left\{Q \in \mathbb{P}^1\left(\overline{K}\right)\;|\; \varphi^n(Q)=P\right\}.$
  We say that a point $P \in \mathbb{P}^1(K)$ is an exceptional point if the backward orbit $O_{\varphi}^{-}(P)$ under $\varphi$ is finite. It is a standard fact that $P$ is an exceptional point of $\varphi$ if and only if $P$ is a totally ramified fixed point of $\varphi^2$ (See page 807 in \cite{SJH1}).

\subsection*{
\textbf{2.2 $S$-rigid Divisibility Sequence and  Primitive Divisor.}
}

\begin{Def}\label{00002}
			Let $K$ be a number field and let $\mathcal{A}=\left\{\mathfrak{A}_n\right\}_{n \geqslant 1}$ be a sequence of nonzero integral ideals.
			We can write $$ \mathfrak{A}_n=\mathfrak{p}_1^{m_1} \cdots \mathfrak{p}_r^{m_r}\mathfrak{q}_1^{t_1} \ldots \mathfrak{q}_{l}^{t_{l}},$$ 
			where $\mathfrak{p}_i \text {'s are the primitive prime  divisors of }\mathfrak{A}_n$ and $\mathfrak{q}_j $'s are the prime  divisors of $\mathfrak{A}_n$ which are not primitive.
			\\	Set:
			$$
			\begin{aligned}
				P_n & =\mathfrak{p}_1^{m_1} \cdots \mathfrak{p}_r^{m_r}\text { the primitive part of } \mathfrak{A}_n \text { and } \\
				{N}_n & =\mathfrak{q}_1^{t_1} \ldots \mathfrak{q}_{l}^{t_{l}}\text { the non-primitive part of } \mathfrak{A}_n.
			\end{aligned}
			$$
\end{Def}
		\begin{Def}\label{00008}
			Let $S$ be a finite set of places of the number field $K,$ including all archimedean places, and let $\mathfrak{A}$ be an integral ideal. The prime-to-S norm of $\mathfrak{A}$ is the quantity
			$$
			\mathcal{N}_S \mathfrak{A}=N_{K/\mathbb{Q}}\left(\prod_{\mathfrak{p} \notin S} \mathfrak{p}^{\operatorname{\rm ord}_{\mathfrak{p}}\mathfrak{A}}\right).
			$$	
		\end{Def}
		
	\begin{lem}\label{00003}
			If $\{\mathfrak{A}_n\}_{n\geqslant 1}$ is an $S$-rigid divisibility sequence, then
			\begin{equation*}
				\mathcal{N}_S N_{n} \leqslant \mathcal{N}_S \left( \prod_{i \mid n, i \neq n} P_i \right),\quad n\geq 2.
			\end{equation*}
		\end{lem}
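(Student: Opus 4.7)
The plan is to show that, restricted to primes outside $S$, the ideal $N_n$ actually divides $\prod_{i\mid n, i\neq n} P_i$; the norm inequality then follows by taking prime-to-$S$ norms. So I will fix an arbitrary prime $\mathfrak{q} \notin S$ with $\operatorname{ord}_{\mathfrak{q}}(N_n) > 0$ and show
\[
\operatorname{ord}_{\mathfrak{q}}(N_n) \;\leq\; \operatorname{ord}_{\mathfrak{q}}\Bigl(\prod_{i\mid n,\, i\neq n} P_i\Bigr).
\]

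First I would let $m$ be the smallest positive index with $\mathfrak{q}\mid \mathfrak{A}_m$. Since $\mathfrak{q}$ is a non-primitive prime divisor of $\mathfrak{A}_n$, necessarily $m<n$ and $\mathfrak{q}\mid \gcd(\mathfrak{A}_m,\mathfrak{A}_n)$. Applying condition (1) in Definition \ref{520001} to the pair $(m,n)$ gives $\mathfrak{q}\mid \mathfrak{A}_{\gcd(m,n)}$; by the minimality of $m$ this forces $\gcd(m,n)=m$, i.e.\ $m\mid n$. Thus $m$ is a proper divisor of $n$, and $\mathfrak{q}$ is by construction a \emph{primitive} prime divisor of $\mathfrak{A}_m$, so by the definition of the primitive part in Definition \ref{00002} we have $\operatorname{ord}_{\mathfrak{q}}(P_m)=\operatorname{ord}_{\mathfrak{q}}(\mathfrak{A}_m)$.

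Next, writing $n=km$ with $k\geq 2$ and applying condition (2) of Definition \ref{520001} (which requires exactly the hypothesis $\operatorname{ord}_{\mathfrak{q}}(\mathfrak{A}_m)>0$ and $\mathfrak{q}\notin S$ that we already have), I get $\operatorname{ord}_{\mathfrak{q}}(\mathfrak{A}_n)=\operatorname{ord}_{\mathfrak{q}}(\mathfrak{A}_m)$. Since $\mathfrak{q}$ is non-primitive in $\mathfrak{A}_n$, its full $\mathfrak{A}_n$-multiplicity lands in $N_n$, so
\[
\operatorname{ord}_{\mathfrak{q}}(N_n)=\operatorname{ord}_{\mathfrak{q}}(\mathfrak{A}_n)=\operatorname{ord}_{\mathfrak{q}}(\mathfrak{A}_m)=\operatorname{ord}_{\mathfrak{q}}(P_m)\;\leq\;\operatorname{ord}_{\mathfrak{q}}\Bigl(\prod_{i\mid n,\, i\neq n} P_i\Bigr),
\]
which is the desired local inequality.

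Finally, since this local inequality holds at every prime $\mathfrak{q}\notin S$, the prime-to-$S$ part of $N_n$ divides the prime-to-$S$ part of $\prod_{i\mid n,\, i\neq n} P_i$ as integral ideals, and multiplicativity of the norm $N_{K/\mathbb{Q}}$ on ideals immediately yields $\mathcal{N}_S N_n \leq \mathcal{N}_S\!\left(\prod_{i\mid n,\, i\neq n} P_i\right)$. The only subtle point (and the one I would double-check carefully) is the step that produces $m\mid n$ from condition (1); everything else is bookkeeping via condition (2) and the definitions of $P_i$, $N_n$, and $\mathcal{N}_S$.
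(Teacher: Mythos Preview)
Your proof is correct and follows essentially the same route as the paper: reduce to a local inequality at each prime $\mathfrak{q}\notin S$, let $m$ (the paper writes $d$) be the first index with $\mathfrak{q}\mid\mathfrak{A}_m$, use condition~(1) of Definition~\ref{520001} together with minimality to get $m\mid n$, and then condition~(2) to identify $\operatorname{ord}_{\mathfrak{q}}(N_n)$ with $\operatorname{ord}_{\mathfrak{q}}(P_m)$. Your write-up is in fact slightly more explicit than the paper's about how conditions~(1) and~(2) are invoked, but there is no substantive difference.
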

\bpf It is sufficient to show that for any $\mathfrak{q} \notin S$, $\operatorname{ord}_{\mathfrak{q}} N_n \leq \operatorname{ord}_{\mathfrak{q}}\left(\prod\limits_{i \mid n, i \neq n} P_i\right)$.
Let $\mathfrak{q}$ be a prime ideal divisor in $N_n$ and $\mathfrak{q} \notin S$. Since $\mathfrak{q}$ is not a primitive prime ideal divisor of $\mathfrak{A}_n$, there exists a positive integer $d<n$ such that $\mathfrak{q}$ is a primitive prime ideal divisor of $\mathfrak{A}_d$. By the definition of $S$-rigid divisibility sequence, we obtain that $\operatorname{gcd}(n, d)=d$ and
$
\operatorname{ord}_{\mathfrak{q}} N_n=\operatorname{ord}_{\mathfrak{q}} \mathfrak{A}_n=\operatorname{ord}_{\mathfrak{q}} \mathfrak{A}_d=\operatorname{ord}_{\mathfrak{q}} P_d \leq \operatorname{ord}_{\mathfrak{q}}\left(\prod\limits_{i \mid n, i \neq n} P_i\right).
$
The last inequality follows from $d \mid n$ and $d<n$.
 \qed
\epf	
\subsection*{
\textbf{2.3 Height and Arithmetic Distance.}
}
Now let $K / \mathbb{Q}$ be a number field. The set of standard absolute values on $K$ is denoted by $M_K.$  We write $M_K^{\infty}$ for the archimedean absolute values on $K$ and $M_K^0$ for the nonarchimedean absolute values on $K.$ 
	For $v\in M_K,$ we also write $K_v$ for the completion of $K$ with respect to $|\cdot|_v,$ and we let $\mathbb{C}_v$ denote the completion of an algebraic closure of $K_v.$
 Let $P=\left[x_0, x_1\right] \in \mathbb{P}^1(K).$ The height of $P$ is
$$
h(P)=\sum_{v \in M_K} \frac{\left[K_v: \mathbb{Q}_v\right]}{[K: \mathbb{Q}]} \log \max \left(\left|x_0\right|_v,\left|x_1\right|_v\right).
$$

To simply notation, we let $$d_v=\frac{\left[K_v: \mathbb{Q}_v\right]}{[K: \mathbb{Q}]}.$$
For the definition of the height of a rational map, we  refer the reader to  \cite{HLC} or \cite{SJH2}.

		\blem\label{4001}
  Let $K$ be a number field. Let $\varphi\in K(z)$ be a rational function of degree $d\geqslant 2.$ Let $P\in \mathbb{P}^1\left(\overline{K}\right).$ Then
		the limit
		\begin{eqnarray}\label{036}
			\hat{h}_{\varphi}(P):=\lim _{n \rightarrow \infty} \frac{h\left(\varphi^n(P) \right)} { d^n}
		\end{eqnarray}
		exists and satisfies:
		\begin{itemize}
	\item[\rm(i)] There are constants \( c_3(d) \) and \( c_4(d) \) such that
\[
|\hat{h}_{\varphi}(P) - h(P)| \leqslant c_3(d) + c_4(d) h(\varphi)
\]
for all \( P \in \mathbb{P}^1\left(\overline{K}\right),\) where \( c_3(d) \) and \( c_4(d) \) depend only on \( d.\) Furthermore, expressions for \( c_3(d) \) and \( c_4(d) \) in terms of \( d \) can be found. 

			\item[\rm(ii)] $\hat{h}_{\varphi}(\varphi(P)) =d \hat{h}_{\varphi}(P)$  for all  $P \in \mathbb{P}^1\left(\overline{K}\right).$
			\item[\rm(iii)]
			$\hat{h}_{\varphi}(P)\geqslant 0,$ and 
			$\hat{h}_{\varphi}(P)>0$ if and only if  $P$ is a wandering point.
			
   	\item[\rm(iv)]
   Let $\phi: \mathbb{P}^1 \rightarrow \mathbb{P}^1$ be a morphism defined over $K$ and let $f \in \mathrm{PGL}_{2}(K)$ be an automorphism of $\mathbb{P}^1.$ \;Set \;$\phi^f=f^{-1} \circ \phi \circ f.$ Then
$$
\hat{h}_{\phi^ f}(P)=\hat{h}_\phi(f(P)) \quad \text { for all } P \in \mathbb{P}^1\left(\overline{K}\right).
$$
\item[\rm(v)] Let $\beta \in K^*$ and we write the ideal $(\beta)$ as a quotient of relatively prime integral ideals $(\beta)=\mathfrak{A} \mathfrak{B}^{-1}.$ Let $S$ be a finite set of places of $K,$ including all archimedean places. Then 
$$h(\beta)=\frac{1}{[K: \mathbb{Q}]}\left(\log \mathcal{N}_S \mathfrak{B}+\sum\limits_{v \in S}\left[K_v: \mathbb{Q}_v\right] \log \max \left\{1,|\beta|_v\right\}\right).$$
\end{itemize}
\elem
\bpf (i)  See \cite{HLC}, Proposition 6 (a), or see \cite{SJH2}, Exercise 3.8 and  page 98.\\
(ii), (iii), (iv) See \cite{SJH2}, Section 3.4 and Exercise 3.11.\\
(v) See \cite{IP}, page 296, (1.17).\qed
\epf
\begin{Def}\label{3001}
		Let $K$ be a number field, and let $P=\left[x_1: y_1\right]$ and $Q=\left[x_2: y_2\right]$ be points in $\mathbb{P}^1(\mathbb{C}_v).$ 
		
    \textnormal{(1) The $v$-adic chordal metric on $\mathbb{P}^1(\mathbb{C}_v)$ is defined by}
		$$
		\rho_v\left(P, Q\right) =
		\begin{cases}
		\displaystyle \frac{\left|x_1 y_2 - x_2 y_1\right|_v}{\sqrt{\left|x_1\right|_v^2 + \left|y_1\right|_v^2} \sqrt{\left|x_2\right|_v^2 + \left|y_2\right|_v^2}} & \textnormal{if } v \in \mathcal{M}_K^{\infty}, \\[15pt]
		\displaystyle \frac{\left|x_1 y_2 - x_2 y_1\right|_v}{\max \left\{\left|x_1\right|_v, \left|y_1\right|_v\right\} \max \left\{\left|x_2\right|_v, \left|y_2\right|_v\right\}} & \textnormal{if } v \in \mathcal{M}_K^0.
		\end{cases}
		$$
		
		\textnormal{(2) The function}
		$
		\lambda_v: \mathbb{P}^1\left(\mathbb{C}_v\right) \times \mathbb{P}^1\left(\mathbb{C}_v\right) \rightarrow \mathbb{R} \cup\{\infty\}
		$
		\textnormal{is defined by}
		$$\lambda_v\left(P,Q\right)=-\log \rho_v\left(P,Q\right).
		$$
		
	\end{Def}

  \blem\label{00005}
Let $K$ be a number field. Let $\infty = [1:0] \in \mathbb{P}^1(K).$ For any point $P  \in \mathbb{P}^1(K),$  we have
\[
h(P) \leqslant \sum_{v \in M_K} d_v \lambda_v(P, \infty).
\]
\elem
\begin{proof}
See \cite{HLC}, page 325. Alternatively,  the result can be directly established by applying the Product Formula.
\qed
\end{proof}

 \vskip 2mm
\section{\centering \textbf{Proof of Theorems {\rm\ref{520002}}}}

\blem\label{00004}
	Let $K$ be a number field and let $\varphi(z)\in K[z]$ of degree $d\geq 2.$ Assume that  $\alpha\in K$ is a wandering point of $\varphi$ and $\varphi^m(\alpha)\neq 0$ for all $m\geq 1.$
	For $n\geqslant 1,$ write the ideal  $(\varphi^n(\alpha))\mathcal{O}_K=\mathfrak{A}_n\mathfrak{B}_n^{-1}$  as a quotient of relatively prime integral ideals.
	Then we have 		$$
		\frac{1}{[K: \mathbb{Q}]} \log \mathrm{N}_{K / \mathbb{Q}} \mathfrak{A}_n \leqslant d^n \hat{h}_{\varphi}(\alpha)+c_3(d)+c_4(d)h(\varphi) \quad \text { for all } n \geqslant 1.
		$$
		where the constants $c_3(d)$ and $c_4(d)$   are determined by Lemma {\rm{\ref{4001}}} {\rm{(i).}}
	\elem
	\bpf
Note that $\varphi^m(\alpha)\neq 0$ for all $m\geq 1.$	We can define $\beta_n=\varphi^n(\alpha)^{-1}.$ Then $\beta_n\mathcal{O}_K=\mathfrak{B}_n\mathfrak{A}_n^{-1}.$
	In Lemma {\rm \ref{4001} (v)}, take $S=M_K^{\infty}$  and $\beta=\beta_n,$ we obtain that
	\begin{eqnarray}\label{038}
		h(\beta_n)\geq\frac{1}{[K:\mathbb{Q}]}\left(\log \mathrm{N}_{K / \mathbb{Q}} \mathfrak{A}_n\right).
	\end{eqnarray}
By Lemma {\rm{\ref{4001}}} {\rm{(i),}} we know that there exist constants $c_3(d)$ and $c_4(d)$ which only depend on $d$ such that

\begin{eqnarray}\label{040}
\begin{array}{ll}
h(\beta_n) &= h(\varphi^n(\alpha)^{-1}) 
    = h\left(\varphi^n(\alpha)\right) \\
    &\leq \hat{h}_{\varphi}\left(\varphi^n(\alpha)\right) + c_3(d) + c_4(d)h(\varphi) 
    \\&= d^n \hat{h}_{\varphi}(\alpha) + c_3(d) + c_4(d)h(\varphi).
    \end{array}
\end{eqnarray}
Combining \eqref{038} and \eqref{040}, we complete the proof.\qed
\epf

\blem\label{00006}
Let $K$ be a number field and let $\varphi(z)\in K[z]$ of degree $d\geq 2.$ Assume that  $\alpha\in K$ is a wandering point of $\varphi$ and $\varphi^m(\alpha)\neq 0$ for all $m\geq 1.$ Let \(S'\) be a finite set of places, including all archimedean ones. Let $T_n := [1 : \varphi^n(\alpha)] \in \mathbb{P}^1(K)$ and
$(\varphi^n(\alpha))\mathcal{O}_K=\mathfrak{A}_n\mathfrak{B}_n^{-1}$  as a quotient of relatively prime integral ideals.
Then
$$
\sum\limits_{v \in S'} d_v \lambda_v(T_n, \infty) \geqslant h(T_n) -\frac{1}{[K: \mathbb{Q}]} \log \mathcal{N}_{S'}(\mathfrak{A}_n).
$$
\elem
\bpf Since $\varphi^m(\alpha)\neq 0$ for all $m\geq 1,$ it follows that $T_n \neq[1$ : 0$]$ (i.e., $\infty$ ) for any $n \geqslant 1.$
Obviously,
if $v \in M_K^0,$ then $\lambda_v\left(T_n, \infty\right)=\log \max \left\{1,\left|\varphi^n(\alpha)\right|_v^{-1}\right\}.$

On the one hand, by Lemma {\rm\ref{00005}}, we obtain
$$
\begin{gathered}
\sum_{v \in S'} d_v \lambda_v\left(T_n, \infty\right)=\sum_{v \in M_K} d_v \lambda_v\left(T_n, \infty\right)-\sum_{v \notin S'} d_v \lambda_v\left(T_n, \infty\right) \\
\quad\quad\quad\quad\quad\quad\quad\quad\quad\geqslant h\left(T_n\right)-\sum_{v \notin S'} d_v \log \max \left\{1,|{\varphi^n(\alpha)}^{-1}|_v\right\}.
\end{gathered}
$$

On the other hand, by $\left(\varphi^n(\alpha)\right) O_K=\mathfrak{A}_n \mathfrak{B}_n^{-1},$ we get $\left(\varphi^n(\alpha)^{-1} \right)O_K=\mathfrak{B}_n \mathfrak{A}_n^{-1}.$\\
By {Lemma 
 \rm\ref{4001}\;(v)}, we obtain
$$
h\left(\varphi^n(\alpha)^{-1}\right)=\frac{1}{[K: \mathbb{Q}]} \log \mathcal{N}_{S'}\left(\mathfrak{A}_n\right)+\sum_{v \in S'} d_v \log \max \left\{1,\left|\varphi^n(\alpha)^{-1}\right|_v\right\}.
$$
By the definition of height, we have
$$
\frac{1}{[K: \mathbb{Q}]} \log \mathcal{N}_{S'}\left(\mathfrak{A}_n\right)=\sum_{v \notin S'} d_v \log \max \left\{1,\left|\varphi^n(\alpha)^{-1}\right|_v\right\}.
$$
Therefore,
\[
\sum_{v \in S'} d_v \lambda_v(T_n, \infty) \geqslant h(T_n) - \frac{1}{[K: \mathbb{Q}]} \log \mathcal{N}_{S'}(\mathfrak{A}_n).
\]
\qed\epf

\blem\label{00007}
Let $\varphi(z)\in K[z]$ of degree $d\geq2.$
 Assume that $0$ is a wandering point of $\varphi.$
Put \( Q_n := [1 : \varphi^n(0)], \;n\geqslant 1 \) and \(\infty = [1:0] \in \mathbb{P}^1(K).\) Let \( \sigma(z) = \frac{1}{z} \in \mathrm{PGL}_2(K) \) and \(\tilde{\varphi} = \sigma^{-1} \circ \varphi \circ \sigma.\) Then $$\hat{h}_{\tilde{\varphi}}(\infty)=\hat{h}_{\varphi}(0),\;
 \hat{h}_{\tilde{\varphi}}(Q_n) = d^n \hat{h}_{\varphi}(0) \;\text{and}\; Q_n = \tilde{\varphi}^n(\infty).$$
\elem

\begin{proof}
By Lemma {\rm\ref{4001}} (iv),
we have $\hat{h}_{\tilde{\varphi}}(\infty)=\hat{h}_{\varphi}(0)$ and $
\hat{h}_{\tilde{\varphi}}(Q_n) = \hat{h}_{\varphi}(\sigma(Q_n)).$
 It is clear that
$
\hat{h}_{\varphi}(\sigma(Q_n)) = \hat{h}_{\varphi}(\varphi^n(0)) = d^n \hat{h}_{\varphi}(0).$
Hence \( \hat{h}_{\tilde{\varphi}}(Q_n) = d^n \hat{h}_{\varphi}(0). \)

It is easy to obtain that
\(
\begin{aligned}
Q_n 
= \sigma^{-1} \circ \varphi^n \circ \sigma([1: 0]) 
= \tilde{\varphi}^n(\infty).
\end{aligned}
\)
\qed
\end{proof}
\begin{lem}\label{330}
With the notation and assumptions as Lemma  {\rm\ref{00007}}.  Let \(S'\) be a finite set of places, including all archimedean ones. Set
$$
J(S',\varphi) = \left\{ n \in \mathbb{N}^* \;\bigg| \;\sum\limits_{v \in S'} d_v \lambda_v\left(Q_n, \infty\right) \geqslant \frac{1}{8} \hat{h}_{\tilde{\varphi}}\left(Q_n\right) \right\}.
$$

Then there exists a constant \( \gamma,\) depending only on \( d \) and \( [K : \mathbb{Q}],\) such that
\[
\#J(S',\varphi) \leqslant 4^{\#S'} \gamma + \log_d\left(\frac{h(\tilde{\varphi})}{\hat{h}_{\varphi}(0)} + 1\right).
\]
\end{lem}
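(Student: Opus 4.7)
The plan is to convert the problem via Lemma \ref{00007} into one about how often $\tilde\varphi^n(\infty)=Q_n$ returns $v$-adically close to $\infty$ summed over $S'$, then control this via a pigeonhole-plus-Diophantine-approximation strategy. By Lemma \ref{00007}, $\hat h_{\tilde\varphi}(Q_n)=d^n\hat h_\varphi(0)$, so the defining condition of $J(S',\varphi)$ rewrites as
\[
\sum_{v\in S'} d_v\,\lambda_v(\tilde\varphi^n(\infty),\infty)\;\ge\;\frac{d^n}{8}\,\hat h_\varphi(0).
\]
I would then split $J(S',\varphi)$ into a small-index regime and a large-index regime and bound each separately.

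For the small regime, set $n_0:=\bigl\lceil\log_d\bigl(h(\tilde\varphi)/\hat h_\varphi(0)+1\bigr)\bigr\rceil$ and bound $\#(J(S',\varphi)\cap[1,n_0])\le n_0$, which accounts for the $\log_d$ term in the statement; in this range the dynamical estimates have too little room to be effective, so we pay the trivial bound.

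For $n>n_0$ I would apply pigeonhole over $S'$: to each such $n$ attach a ``signature'' in a set of size at most $4^{\#S'}$ by placing each $v\in S'$ into one of four buckets according to which dyadic range the ratio $d_v\lambda_v(Q_n,\infty)/\hat h_{\tilde\varphi}(Q_n)$ lies in. By pigeonhole, some signature $\sigma$ is attained by at least $(\#J(S',\varphi)-n_0)/4^{\#S'}$ of the large indices, and within such a signature class the sum is concentrated on a common subset $T_\sigma\subseteq S'$ with uniform relative weights, so for each $v\in T_\sigma$ we obtain a uniform lower bound $d_v\lambda_v(Q_n,\infty)\ge c_\sigma d^n\hat h_\varphi(0)$ for a fixed $c_\sigma>0$ depending only on $d$, $[K:\mathbb{Q}]$, and $\#S'$.

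The main obstacle is the single-signature bound, namely a \emph{uniform quantitative approach estimate}: for each place $v\in M_K$ and each $c>0$, the number of indices $n$ with $\lambda_v(\tilde\varphi^n(\infty),\infty)\ge c\,d^n\hat h_\varphi(0)$ should be bounded by a constant depending only on $d$, $[K:\mathbb{Q}]$, and $c$. This is a quantitative version of the statement that $\infty$ is not an exceptional target for $\tilde\varphi$ (equivalently, $0$ is not exceptional for $\varphi$, which follows since $0$ is a wandering point under the polynomial $\varphi$). Such a bound can be proved via the Call--Silverman local canonical height decomposition together with the compactness of $\mathbb{P}^1(\mathbb{C}_v)$ (in the nonarchimedean case) and a standard Koebe-type estimate (in the archimedean case), combining to give the uniform constant $\gamma$. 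The delicate part is ensuring the archimedean and nonarchimedean estimates produce matching exponents so that a single $\gamma$ depending only on $d$ and $[K:\mathbb{Q}]$ suffices. Once this uniform bound is in place, multiplying by the pigeonhole factor $4^{\#S'}$ and adding the small-regime contribution $\log_d(h(\tilde\varphi)/\hat h_\varphi(0)+1)$ yields the claimed inequality.
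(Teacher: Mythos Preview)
Your proposal has a genuine gap: the entire content of this lemma in the paper is a one-line citation of Theorem~11(b) in Hsia--Silverman \cite{HLC}, applied with $A=P=\infty$, $\epsilon_0=\tfrac18$, and the map $\tilde\varphi$, after checking that $\infty$ is not exceptional for $\tilde\varphi$ (which follows, as you note, because $0$ is wandering for $\varphi$). What you are attempting is to reprove that theorem from scratch, and the step you label the ``main obstacle'' \emph{is} the theorem. Your sketch for it (``Call--Silverman local heights plus compactness of $\mathbb{P}^1(\mathbb{C}_v)$ plus a Koebe-type estimate'') does not yield the required uniformity: such arguments typically produce constants depending on the particular map $\tilde\varphi$, whereas the statement demands a $\gamma$ depending only on $d$ and $[K:\mathbb{Q}]$. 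Achieving that uniformity is precisely the delicate content of Hsia--Silverman's work and requires their quantitative equidistribution/Diophantine-approximation machinery, not just local compactness.

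Your four-bucket pigeonhole description is also not how the factor $4^{\#S'}$ actually arises in \cite{HLC}; the exponent base $4$ there comes from a specific combinatorial structure in their argument (related to pulling back the target along $\tilde\varphi$ and $\tilde\varphi^2$), not from partitioning a ratio into four dyadic ranges. Even granting your single-place estimate, it is not clear your bucketing scheme would recombine to give exactly $4^{\#S'}\gamma$. The cleanest fix is simply to invoke Theorem~11(b) of \cite{HLC} directly, as the paper does; if you want a self-contained argument, you must reproduce the full Hsia--Silverman proof, which is substantially more than what you have sketched.
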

\bpf
Since \( 0 \) is a wandering point of \( \varphi,\) we know that \(\infty = [1:0]\) is a wandering point of \(\tilde{\varphi}.\) Therefore, \(\infty\) is not an exceptional point of \(\tilde{\varphi}.\)

Applying Theorem 11(b) in \cite{HLC} for
 \( A = [1:0] \) (i.e., \(\infty\)),\;\( P = [1:0],\)\;$\epsilon_0=\frac{1}{8},$ \(\tilde{\varphi}\) and $S',$ we obtain that 
there exists a constant \(\gamma\) depending only on \(d\) and \([K : \mathbb{Q}]\) such that
\[\begin{array}{l}
\;\;\;\;\#\left\{n \in \mathbb{N}^* \;\bigg|\; \sum_{v \in S'} d_v \lambda_v\left(Q_n, \infty\right) \geqslant \frac{1}{8} \hat{h}_{\tilde{\varphi}}\left(Q_n\right)\right\} \\[9pt]
=
\#\left\{n \in \mathbb{N}^* \;\bigg|\; \sum_{v \in S'} d_v \lambda_v\left(\tilde{\varphi}^n(\infty), \infty\right) \geqslant \frac{1}{8} \hat{h}_{\tilde{\varphi}}\left(\tilde{\varphi}^n(\infty)\right)\right\}\\ [8pt]
\leqslant 4^{\#S'} \gamma + \log_d\left(\frac{h(\tilde{\varphi})}{\hat{h}_{\varphi}(0)} + 1\right).
\end{array}
\]
\qed\epf

Assume that \(0\) is a wandering point of \(\varphi.\)  Define

\[
X(\varphi) = \left\{ n \in \mathbb{N} \;\middle|\; n \leqslant \log_d^{+} \left(\frac{c_3(d) + c_4(d) h(\varphi)}{\frac{1}{8} \widehat{h}_{\varphi}(0)}\right) \right\},
\]
\vskip 1.5mm
\[
I(\varphi) = \left\{ n \in \mathbb{N} \;\middle|\; \frac{(n-1)(c_3(d) + c_4(d) h(\varphi)) + \widehat{h}_{\varphi}(0) \frac{d^n - d}{d - 1}}{\frac{3}{4} \widehat{h}_{\varphi}(0) d^n} \geqslant 1 \right\},
\]
where the constants $c_3(d)$ and $c_4(d)$   are determined by Lemma {\rm{\ref{4001}}} {\rm{(i).}}

\blem\label{000081}
With the notation and assumptions as Lemmas {\rm\ref{00007}} and  {\rm\ref{330}}. Let $(\varphi^n(0))\mathcal{O}_K=\mathfrak{A}_n\mathfrak{B}_n^{-1}.$
If \( n \notin X(\varphi)\cup J(S', \varphi),\) then
\[
\frac{3}{4} \hat{h}_{\varphi}(0) d^n < \frac{1}{[K: \mathbb{Q}]} \log \mathcal{N}_{S'}(\mathfrak{A}_n).
\]
\elem

\bpf
Since \( n \notin J(S',\varphi),\) we have
\[
\sum_{v \in S'} d_v \lambda_v\left(Q_n, \infty\right) < \frac{1}{8} \hat{h}_{\tilde{\varphi}}\left(Q_n\right).
\]
Applying Lemma {\rm\ref{00006}} for $\alpha = 0$, and noting that \( \operatorname{deg} \varphi = \operatorname{deg} \tilde{\varphi},\) as well as Lemma {\rm\ref{00007}}, we obtain
\[
h(Q_n) - \frac{1}{[K: \mathbb{Q}]} \log \mathcal{N}_{S'}(\mathfrak{A}_n) < \frac{1}{8} d^n \hat{h}_{\varphi}(0).
\]
From
$h(\varphi^n(0)) > \hat{h}_{\varphi}(\varphi^n(0)) - c_3 (d) - c_4 (d) h(\varphi)$
and 
$\hat{h}_{\varphi}(\varphi^n(0)) = d^n \hat{h}_{\varphi}(0),$
we get
\[
h(\varphi^n(0)) > d^n \hat{h}_{\varphi}(0) - c_3 (d) - c_4 (d) h(\varphi).
\]

Hence
\[
\frac{7}{8} d^n \hat{h}_{\varphi}(0) - c_3(d) - c_4(d) h(\varphi) < \frac{1}{[K: \mathbb{Q}]} \log \mathcal{N}_{S'}(\mathfrak{A}_n).
\]
By \( n \notin X(\varphi),\) we obtain
$\frac{-c_3(d) - c_4(d) h(\varphi)}{d^n} > -\frac{1}{8} \hat{h}_{\varphi}(0).$
Therefore,
\[
\frac{3}{4} \hat{h}_{\varphi}(0) d^n < \frac{1}{[K: \mathbb{Q}]} \log \mathcal{N}_{S'}(\mathfrak{A}_n).
\]
\qed
\epf

\subsection*{
\textbf{Proof of Theorem \ref{520002}.}}
Set $\psi(z)=\varphi(z+\alpha)-\alpha.$ Then  we have
$$
\varphi^n(\alpha)-\alpha=\psi^n(0), n \geqslant 1.
$$
Hence  $0$ is a wandering point of $\psi(z).$ 
Obviously, $\operatorname{deg}(\psi)=\operatorname{deg}(\varphi)=d.$\;
First, we claim that if
$n \notin I(\psi)\cup J(S,\psi)\cup X(\psi),$ then $\mathfrak{A}_n $ has a primitive divisor.

The primitive part of \( \mathfrak{A}_n \) is denoted by \( P_n,\) and the non-primitive part is denoted by \( N_n.\)
Note that
$$
\begin{aligned}
&\log \mathcal{N}_S\left(N_n\right)\leqslant \log\mathcal{N}_S \left( \prod_{i \mid n, i \neq n} P_i \right)
\leqslant \sum_{j=1}^{n-1} \log \mathcal{N}_S\left(\mathfrak{A}_j\right)
\leqslant \sum_{j=1}^{n-1} \log N_{K/\mathbb{Q}}\left(\mathfrak{A}_j\right)
\\
& \leqslant[K:\mathbb{Q}]\left((n-1)\left(c_3(d)+c_4(d) h(\psi)\right)+\hat{h}_{\psi}(0) \frac{d^n-1}{d-1}\right) 
<\frac{3}{4}[K: \mathbb{Q}] \hat{h}_{\psi}(0) d^n \\
& <\log \mathcal{N}_S\left(\mathfrak{A}_n\right).\\& \end{aligned}
$$\vspace{-2em}

The first inequality comes from Lemma {\rm\ref{00003}.}
The second inequality comes from \( P_j \mid \mathfrak{A}_j.\)
The third inequality comes from the Definition {\rm\ref{00008}.}
The fourth inequality comes from Lemma {\rm\ref{00004}.}
The fifth inequality comes from \( n \notin I(\psi).\)
The sixth inequality comes from  Lemma {\rm\ref{000081}}.
Hence
$$
\log \mathcal{N}_S\left(P_n\right)=\frac{\log \mathcal{N}_S\left(\mathfrak{A}_n\right)}{\log \mathcal{N}_S\left(N_n\right)}>1.\;
$$
Therefore $P_n $ is not trivial,\;i.e.,\;\;$\mathfrak{A}_n$ \;has a primitive divisor. This completes the proof of the claim.
Therefore, we have
$$\#\mathcal{Z}(\{\mathfrak{A}_n\}_{n \geqslant 1}) \leqslant  \# I(\psi) +\# J(S,\psi)+ \# X(\psi).$$
Note that $d\geqslant 3.$
A bit of algebraic calculation implies that
\[
\#I(\psi) \leqslant 1+\frac{8(c_3(d) + c_4(d) h(\psi))}{(3d-7) \hat{h}_{\psi}(0)}.
\]
 So, we can take $M$ as follows:
\[
1 + \log_d^+ \left( \frac{8(c_3(d) + c_4(d) h(\psi))}{\widehat{h}_{\psi}(0)} \right) 
+ \frac{8(c_3(d) + c_4(d) h(\psi))}{(3d-7) \hat{h}_{\psi}(0)} 
+ 4^{\#S} \gamma + \log_d \left( \frac{h(\tilde{\psi})}{\hat{h}_{\psi}(0)} + 1 \right).
\]
 \qed
\section{\bf Proof of Theorem {\rm\ref{520004}} }
Let  \(\varphi(z) = f_1(z)^{e_1} f_2(z)^{e_2} \cdots f_m(z)^{e_m}\) be a powerful polynomial over $K.$ If the \( i \)-th coefficient of \( f_j(z) \) is non-zero, we denote it by \( a^{(j)}_i.\) Write \( a^{(j)}_i \mathcal{O}_K = I^{(j)}_i (J^{(j)}_i)^{-1},\) where \( I^{(j)}_i \) and \( J^{(j)}_i \) are relatively prime integral ideals.\\
Put 
$$
		S\!=S(f_1,f_2,\ldots,f_m) \!=\! \left\{\text{prime ideal } \bm{\beta} \;\bigg|\;\bm{\beta} \mid J^{(j)}_i \text{ for some } 1 \leqslant j \leqslant m \text{ and } i\right\} \cup M_K^{\infty}.$$
Let $\mathcal{O}_{K, S}$  be the ring of  $S$ -integers given by $$ \mathcal{O}_{K, S}=\left\{x \in K|\operatorname{\rm ord}_{\mathfrak{q}}(x)\geqslant 0 \text { for all prime ideal}\; \mathfrak{q} \notin S\right\}.$$
It is obvious that $f_j(z)\in \mathcal{O}_{K, S}[z]$ for any $1\leqslant j\leqslant m.$ Hence $\varphi^n(0) \in \mathcal{O}_{K, S}$ for any $n \geqslant 1.$ 
		In light of the facts that $\mathfrak{A}_n$ and $\mathfrak{B}_n$ are relatively prime integral ideals,  we conclude that
		$$\operatorname{\rm ord}_{\mathfrak{p}}{(\varphi^{n}(0))}=\operatorname{\rm ord}_{\mathfrak{p}}{\mathfrak{A}_n}, \;\;\;\text{for any}\; n\in\mathbb{N}^*\;\text{and}\;\mathfrak{p} \notin S.$$
		Let $\mathfrak{p}$ be  a prime ideal such that $\mathfrak{p} \notin S$ and $\operatorname{ord}_{\mathfrak{p}}(\varphi^{n_{0}}(0)) > 0$ for some $n_0 \in \mathbb{N}^*.$
  
  Let $r = \min\{m \in \mathbb{N}^* \mid \operatorname{ord}_{\mathfrak{p}}(\varphi^{m}(0)) > 0\}.$ Write
		\begin{align}\label{thm3.7-2}
			\varphi^r(z) &= z g_r(z) + \varphi^r(0),
		\end{align}
		where \( g_r(z) \in \mathcal{O}_{K, S}[z].\)
Set \( E = \max\limits_{1 \leqslant j \leqslant m} e_j,\) we obtain that
\[
\varphi(z) \mid (\varphi'(z))^E \;\text{in the ring} \; \mathcal{O}_{K, S}[z].
\]
Hence
$
\varphi^{r}(0)\mid (\varphi'(\varphi^{r-1}(0)))^E$ in $\mathcal{O}_{K, S}.
$
(Note that $0$ is a wandering point of $\varphi.$)

Let \( g_r(z) = c_0 + c_1 z + \cdots + c_d z^d.\) Then \\
		
		\( c_0 = g_r(0) = \left. \left( \varphi^r(z) \right)' \right|_{x=0} = \varphi'(0) \varphi'(\varphi(0)) \cdots \varphi'(\varphi^{r-1}(0)).\)\\
		Hence, $$\operatorname{ord}_{\mathfrak{p}}(c_0)\geqslant \operatorname{ord}_{\mathfrak{p}}(\varphi'(\varphi^{r-1}(0)))\geqslant \frac{1}{E}\operatorname{ord}_{\mathfrak{p}}(\varphi^{r}(0))>0.$$
		
	On the one hand,	for any $j\in\mathbb{N}^{*},$ 
 $$\varphi^{jr}(0)=\varphi^r(\varphi^{(j-1)r}(0))\xlongequal[]{\eqref{thm3.7-2}}\varphi^{(j-1)r}(0)g_r(\varphi^{(j-1)r}(0)))+\varphi^r(0).$$
Hence, by induction on $j,$ we have
\begin{equation}\label{thm3.7-3}
  \operatorname{ord}_{\mathfrak{p}}\left(\varphi^{jr}(0)\right) = \operatorname{ord}_{\mathfrak{p}}\left(\varphi^{r}(0)\right) \quad \text{for any} \quad j \in \mathbb{N}^{*}.
\end{equation}

\noindent If $k < r,$ then the minimality of $r$ implies that $\operatorname{ord}_{\mathfrak{p}}(\varphi^{k}(0)) = 0.$ If $k>r$ and $r \nmid k,$ then $k = qr + l$ with $0 < l < r$ and $q \geqslant 1.$ 
Let 
$\varphi^l(z) = z g_l(z) + \varphi^l(0),$ where \( g_l(z) \in \mathcal{O}_{K, S}[z].\)
		Then we have
\begin{eqnarray*}\varphi^k(0)=\varphi^l\left(\varphi^{qr}(0)\right)=\varphi^{qr}(0) g_l\left(\varphi^{qr}(0)\right)+\varphi^l(0).\end{eqnarray*}

From \eqref{thm3.7-3}, we have $\operatorname{ord}_{\mathfrak{p}}(\varphi^{qr}(0)) > 0.$ Note that the minimality of $r$ implies that $\operatorname{ord}_{\mathfrak{p}}(\varphi^{l}(0)) = 0.$  Hence, we conclude that $\operatorname{ord}_{\mathfrak{p}}(\varphi^{k}(0)) = 0.$ 
	
On the other hand,
{\rm{}
let \( m, n \geqslant 1,\) and let \(\mathfrak{p}\) be a prime ideal of $\mathcal{O}_{K}$  with \(\mathfrak{p} \notin S\) and \(\mathfrak{p} \mid \operatorname{gcd}\left(\mathfrak{A}_n, \mathfrak{A}_m\right).\) 

By \(\operatorname{ord}_{\mathfrak{p}}(\mathfrak{A}_n) > 0\) and \(\operatorname{ord}_{\mathfrak{p}}(\mathfrak{A}_m) > 0,\) we have \( r \mid n \) and \( r \mid m,\) and so \( r \mid \operatorname{gcd}(n, m).\) Therefore,
\[
\operatorname{ord}_{\mathfrak{p}}(\mathfrak{A}_r) = \operatorname{ord}_{\mathfrak{p}}(\mathfrak{A}_{\operatorname{gcd}(n, m)}) = \operatorname{ord}_{\mathfrak{p}}(\mathfrak{A}_n) = \operatorname{ord}_{\mathfrak{p}}(\mathfrak{A}_m).
\]

 Hence, the sequence \(\{\mathfrak{A}_n\}_{n \geqslant 1}\) forms an \( S \)-rigid divisibility sequence.

By Theorem~\ref{520002}, we conclude that \(\mathcal{Z}(\{\mathfrak{A}_n\}_{n \geqslant 1})\) is a finite set.
\qed
}

\section{\bf Proof of Theorem {\rm\ref{520005}} }		
\begin{lem}\label{1650}
Let $m \geqslant 2,$ and $\varphi(z) =\prod\limits_{i=1}^m (zf_i(z) + a_i)^{e_i},$ where $a_1, a_2, \ldots, a_m\in\Z,$   $a_1a_2\cdots a_m\neq 0,\pm 1,$ and $e_i \geqslant 2,$  $f_i(z) \in \mathbb{Z}[z]$  has no integer roots,  $i = 1, 2, \dots, m.$
 Then $
\left|\varphi^n(0)\right| > \left|\varphi^{n-1}(0)\right|^2\geqslant \left|\varphi(0)\right|^2\geqslant 4$ for all $n \geqslant 2.$\end{lem}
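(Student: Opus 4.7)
The plan is to combine three ingredients: (i) a lower bound $|\varphi(0)| \geqslant 4$; (ii) the divisibility $\varphi(0) \mid \varphi^n(0)$ for every $n \geqslant 1$; and (iii) a pointwise key claim: for every $b \in \mathbb{Z}$ with $\varphi(0) \mid b$ and $|b| \geqslant 4$, one has $|\varphi(b)| > b^2$. Granting these, the lemma follows by induction on $n$ upon setting $b = \varphi^{n-1}(0)$, since the tail inequalities $|\varphi^{n-1}(0)|^2 \geqslant |\varphi(0)|^2 \geqslant 4$ are then immediate from (i) and (ii).

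For (i), note $\varphi(0) = \prod_{i=1}^m a_i^{e_i}$, and the hypothesis $a_1 a_2 \cdots a_m \neq 0, \pm 1$ (together with $a_i \in \mathbb{Z}$) forces some $|a_j| \geqslant 2$; since $e_j \geqslant 2$, $|\varphi(0)| \geqslant |a_j|^{e_j} \geqslant 4$. For (ii), write $\varphi(z) = \varphi(0) + z\, h(z)$ with $h(z) \in \mathbb{Z}[z]$; then $\varphi^{n+1}(0) = \varphi(0) + \varphi^n(0)\, h(\varphi^n(0))$, and a routine induction yields $\varphi(0) \mid \varphi^n(0)$.

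For (iii), fix $b \in \mathbb{Z}$ with $\varphi(0) \mid b$ and $|b| \geqslant 4$. Since $a_i^{e_i} \mid \varphi(0) \mid b$ and $e_i \geqslant 2$, one has $|a_i| \leqslant |b|^{1/e_i} \leqslant \sqrt{|b|}$. Because $f_i$ has no integer roots, $|f_i(b)| \geqslant 1$, so
\[
|b f_i(b) + a_i| \geqslant |b|\,|f_i(b)| - |a_i| \geqslant |b| - \sqrt{|b|} \geqslant |b|/2,
\]
the last step using $|b| \geqslant 4$. Multiplying across $i$ and using $\sum_i e_i \geqslant 2m \geqslant 4$,
\[
|\varphi(b)| \geqslant \left(\frac{|b|}{2}\right)^{\sum_i e_i} \geqslant \left(\frac{|b|}{2}\right)^{4} = \frac{|b|^4}{16},
\]
which strictly exceeds $b^2$ whenever $|b| \geqslant 5$.

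The main obstacle is the borderline case $|b| = 4$. Here $\varphi(0) \mid b$ combined with $|\varphi(0)| \geqslant 4$ forces $|\varphi(0)| = 4$, and the constraint $e_i \geqslant 2$ leaves only the pattern: exactly one $|a_j| = 2$ with $e_j = 2$, and $|a_i| = 1$ for all $i \neq j$. The sharper observation I will use is that $bf_i(b)$ is a nonzero multiple of $b = \pm 4$, so $|bf_j(b) + a_j| \in \{2,6,10,\ldots\}$ and $|bf_i(b) + a_i| \in \{3,5,7,\ldots\}$ for $i \neq j$. Since $m \geqslant 2$,
\[
|\varphi(b)| \geqslant 2^{2} \cdot 3^{\sum_{i \neq j} e_i} \geqslant 4 \cdot 3^{2(m-1)} \geqslant 36 > 16 = b^2,
\]
which settles the edge case. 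To assemble the induction: for $n = 2$ apply the key claim with $b = \varphi(0)$; for $n \geqslant 3$, the inductive hypothesis together with (ii) gives $|\varphi^{n-1}(0)| \geqslant |\varphi(0)| \geqslant 4$, so the key claim applied to $b = \varphi^{n-1}(0)$ yields the desired strict inequality and, in particular, keeps $\varphi^n(0) \neq 0$ so that the induction persists.
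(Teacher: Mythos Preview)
Your argument is correct and is, in fact, a bit cleaner than the paper's. Both proofs hinge on the same factor-by-factor estimate $|bf_i(b)+a_i|\geqslant |b|-|a_i|$ together with $|a_i|\leqslant\sqrt{|b|}$ for $b=\varphi^{n-1}(0)$, but they reach the bound $|a_i|\leqslant\sqrt{|b|}$ by different routes. The paper first establishes an explicit growth law $|\varphi^n(0)|\geqslant\max_j|a_j|^{\alpha_n}$ with $\alpha_n=\frac{2^n(m-1)m^{n-1}+2m}{2m-1}$ by a separate induction, and uses $\alpha_{n-1}\geqslant 2$ to deduce $|a_i|^2\leqslant|\varphi^{n-1}(0)|$. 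You instead exploit the divisibility $a_i^{e_i}\mid\varphi(0)\mid\varphi^{n-1}(0)$ (with $e_i\geqslant 2$), which gives the same bound immediately and for free; this bypasses the auxiliary sequence $\alpha_n$ entirely. For the strict inequality, the paper splits into the cases $n\geqslant 3$, $m\geqslant 3$, or unequal $|a_i|$'s (each forcing some intermediate inequality to be strict) and then handles $m=2$, $|a_1|=|a_2|$ separately, whereas you isolate the single borderline value $|b|=4$ and dispatch it by a short congruence argument. Your approach is more self-contained; the paper's yields an explicit exponential growth rate $\alpha_n$, though that extra information is not used elsewhere.
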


\begin{proof} 
First, we claim that 
$\left|\varphi^n(0)\right| \geqslant \max\limits_{1\leqslant j\leqslant m}\{|a_j|^{\alpha_n}\},$  where $\alpha_n=\frac{2^n(m-1)m^{n-1}+2m}{2m-1},$  $n \geqslant 1.$
We prove the claim by induction on $n.$  For $n=1,$ we have
$$|\varphi(0)|=\prod\limits_{i=1}^m|a_i|^{e_i}\geqslant\max\limits_{1\leqslant j\leqslant m}\{|a_j|^2\}=\max\limits_{1\leqslant j\leqslant m}\{|a_j|^{\alpha_1}\}.$$

Suppose $n\geqslant 2$ and $|\varphi^{n-1}(0)|\geqslant \max\limits_{1\leqslant j\leqslant m}\{|a_j|^{\alpha_{n-1}}\}.$  Then 
\begin{eqnarray}\label{lemma5.1-1}
\begin{aligned}
    \left|\varphi^n(0)\right| &= \prod_{i=1}^m \left|\varphi^{n-1}(0)f_i(\varphi^{n-1}(0)) + a_i\right|^{e_i}\\
    &\geqslant \prod_{i=1}^m \left(\left|\varphi^{n-1}(0)\right|-\left|a_i\right|\right)^{e_i} \\
         &\geqslant \prod_{i=1}^m \left(\max\limits_{1\leqslant j\leqslant m}\{|a_j|^{\alpha_{n-1}}\}-
         \max\limits_{1\leqslant j\leqslant m}\{|a_j|^{\alpha_{n-1}-1}\}\right)^{e_i} \\
    &\geqslant \prod_{i=1}^m\left( \max\limits_{1\leqslant j\leqslant m}\{|a_j|^{\alpha_{n-1}-1}\}\right)^{e_i}\\
    & \geqslant \max\limits_{1\leqslant j\leqslant m}\{|a_j|^{2m(\alpha_{n-1}-1)}\}\\
    &=\max\limits_{1\leqslant j\leqslant m}\{|a_j|^{\alpha_{n}}\}.
\end{aligned}
\end{eqnarray}
This completes the proof of the claim.

For any $n\geqslant 2,$ $1\leqslant i\leqslant m,$ it is obvious that 
\begin{eqnarray}\label{lemma5.1-2}\left|\varphi^{n-1}(0)\right|\geqslant \max\limits_{1\leqslant j\leqslant m}\{|a_j|^{\alpha_{n-1}}\}\geqslant \max\limits_{1\leqslant j\leqslant m}\{|a_j|^{2}\}\geqslant \max\{|a_i|^2,4\}.\end{eqnarray}
Hence, we have 
\begin{eqnarray}\label{lemma5.1-3}
\begin{aligned}
    \left|\varphi^n(0)\right| &= \prod_{i=1}^m \left|\varphi^{n-1}(0)f_i(\varphi^{n-1}(0)) + a_i\right|^{e_i}\\
    &\geqslant \prod_{i=1}^m \left(\left|\varphi^{n-1}(0)\right|-\left|a_i\right|\right)^{e_i}\\
    &\geqslant \prod_{i=1}^m \left(\left|\varphi^{n-1}(0)\right|-\sqrt{\left|\varphi^{n-1}(0)\right|}\right)^{e_i}\\
    & \geqslant \left(\sqrt{\left|\varphi^{n-1}(0)\right|}\right)^{2m}\\
    &\geqslant \left|\varphi^{n-1}(0)\right|^{2}.
         \end{aligned}
\end{eqnarray}

  It is clear that  $\alpha_i>\alpha_1=2$ for any $i\geqslant 2.$ By \eqref{lemma5.1-1}, \eqref{lemma5.1-2} and \eqref{lemma5.1-3},  we have 
 $\left|\varphi^n(0)\right|> \left|\varphi^{n-1}(0)\right|^{2}$  if $n\geqslant 3$ or $m\geqslant 3$ or $|a_i|\neq |a_j|$ for some $1\leqslant i\neq j\leqslant m.$ 

If $m=2$ and $|a_1|=|a_2|\geqslant 2, e_1\geqslant 2, e_2\geqslant 2,$  then $|\varphi(0)| =|a_1^{e_1}a_2^{e_2}|>|a_1|^2=|a_2|^2.$ From \eqref{lemma5.1-3}, we have  $|\varphi^2(0)| > |\varphi(0)|^2.$
\qed
\end{proof}
\subsection*{
\textbf{Proof of Theorem \ref{520005}.}}
{\rm{
(1) It is trivial  that if $0$ is a fixed point of $\varphi(z),$ then $0$ is a preperiodic point of $\varphi(z).$ 

Assume that $0$ is not a fixed point of $\varphi(z).$ It is obvious that $a_1, a_2, \ldots, a_m$ are non-zero integers. By Lemma {\ref{1650}},  $
\left|\varphi^n(0)\right| > \left|\varphi^{n-1}(0)\right|^2\geqslant \left|\varphi(0)\right|^2 \geqslant 4$  for all  $n \geqslant 2.$ 
Hence, $
\left|\varphi^n(0)\right| > \left|\varphi^{n-1}(0)\right|$ for all $n\geq2.$
So, $0$ is not a preperiodic point of $\varphi(z).$

(2) Since $
\left|\varphi^n(0)\right| > \left|\varphi^{n-1}(0)\right|^2$ for all $n\geq2,$ we have $\prod\limits_{k=1}^{n-1}\left|\varphi^k(0)\right|<\left|\varphi^n(0)\right|.$
In Theorem {\rm\ref{520004}}, we take $K=\mathbb{Q}, S=M_\mathbb{Q}^{\infty}.$
Then $\{\varphi^n(0)\}_{n\geqslant 1}$ is an $S$-rigid divisibility sequence.
Let $P_n$ be the primitive part of $\varphi^n(0)$
and $N_n$ be the non-primitive part of $\varphi^n(0).$ 
By Lemma {\rm\ref{00003}}, 
$$
|N_n|\leq\prod_{d \mid n, d \neq n} |P_d| \leqslant \prod_{k=1}^{n-1} |P_k| \leqslant \prod_{k=1}^{n-1}|\varphi^k(0)|<\left|\varphi^n(0)\right|, n\geqslant 2.
$$
Hence $P_n$ is not trivial for $n\geqslant 2.$
This completes the proof of (2).
\qed
}}

         \vskip 2mm
		
\end{document}